\newtheorem {theorem}{Theorem}[section]
\newtheorem {lemma} [theorem] {Lemma}
\newtheorem {proposition} [theorem] {Proposition}
\newtheorem {corollary} [theorem] {Corollary}
\theoremstyle{definition}
\newtheorem{definition}[theorem]{Definition}
\newtheorem{remark}[theorem]{Remark}
\DeclareMathOperator{\lcm}{lcm}
\DeclareMathOperator{\Ind}{Ind}
\title{Exceptional Primes in Notions of Arithmetic Similarity}
\author{Shaver Phagan}
\address{Department of Mathematics\\ Purdue University, West Lafayette, IN}
\email{phagan@purdue.edu}
\begin{document}
\maketitle
\section*{Abstract}
 A notion of arithmetic similarity between number fields is defined by requiring equality of some arithmetic statistics over all but finitely many rational primes.  The exceptional set is empty in all previously studied cases, but existing methods for proving this are eclectic. We give a group theoretic explanation for those cases and show that some notions of arithmetic similarity admit a non-empty exceptional set.  Furthermore, we prove a formula for actions of finite cyclic groups on finite sets, which augments a classical result of Gassmann and might also be of independent interest.

\section{Introduction}
We must begin by setting terminology for the discussion to follow. Denote by \(\mathcal{M}_\mathbb{N}^f\) the set of \(\mathbb{N}-\)valued multisets with finite support and identify \(\mathbb{N}\subset\mathcal{M}_\mathbb{N}^f\) by the rule $$\mathbb{N}\ni n\mapsto\{n\}=\left(k\mapsto\begin{cases}
1, & k=n\\
0, & k\neq n
\end{cases}\right)\in\mathcal{M}_\mathbb{N}^f.$$
The \textbf{elements} of a multiset \(s\) are the elements in its support \(\text{supp}(s)\).
\begin{definition}
 The \textbf{splitting type} \(S_F(p)\) of a rational prime \(p\) in a number field \(F\)  is defined as the multiset of residual degrees of \(F\) over \(p\).  Precisely, if \(p\mathcal{O}_F=\mathfrak{p}_1^{e_1}\cdots\mathfrak{p}_n^{e_n}\) is the decomposition of \(p\) into prime ideals in the ring of integers of \(F\), then \(S_F(p)\) is the \textit{multiset} \(\{f_i\}_{i=1}^n\), where \(f_i=[\mathcal{O}_F/\mathfrak{p}_i:\mathbb{Z}/p\mathbb{Z}]\).
\end{definition}
\begin{definition}
Let \(\mathbb{P}\) be the set of rational primes and \(\mathcal{N}\) the set of number fields.  A \textbf{splitting statistic} is a function \(s:\mathbb{P}\times\mathcal{N}\rightarrow \mathcal{M}_\mathbb{N}^f\) of the form \(s(p,F)(n)=f_n(S_F(p))\) for some functions \(f_n:\mathcal{M}_\mathbb{N}^f\rightarrow\mathbb{N}\) and such that \(\text{supp}(s(\cdot,F))\) and \(|s(\cdot,F)|\) are uniformly bounded.
\end{definition}

The splitting statistics we consider in this note are always either \((p,F)\mapsto S_F(p)\) or a straightforward \(\mathbb{N}\)-valued function on \(S_F(p)\) (c.f. \ref{q}-\ref{uc} below).

\begin{definition}
Given a splitting statistic \(s\), a \textbf{notion of arithmetic similarity} induced by \(s\) is an equivalence relation \(\sim\) on \(\mathcal{N}\), given by the rule \(K\sim F\) if \(s(p,F)=s(p,K)\) for all but finitely many \(p\in\mathbb{P}\), or \textit{cofinitely} many \(p\).  The set of primes where \(s(p,F)\neq s(p,K)\) will be referred to as the \textbf{exceptional set} of \(s\) for \(K\) and \(F\), and we will say that a notion of arithmetic similarity \textit{admits a non-empty exceptional set} if it is induced by a splitting statistic with a non-empty exceptional set for some number fields.
\end{definition}
A notion might be defined with an equality almost everywhere, in the sense of the natural density on \(\mathbb{P}\).  Since our notions are required to factor through the splitting type, a Chebotarev argument shows that this implies equality at cofinitely many primes.  In particular, the exceptional set is always a subset of the ramified primes of a Galois closure.\\

Notions of arithmetic similarity have received significant attention (c.f. \cite{perlis_equation_1977, klingen_zahlkorper_1978, jehne_kronecker_1977, lochter_new_1995, lochter_weakly_1994, lochter_weakly_1994-1, stuart_new_1995, klingen_arithmetical_1998, bogart_ell_2018}) and are a source of interesting partitions of \(\mathcal{N}\), which reflect various properties of Dedekind zeta functions.  For instance, if the underlying splitting statistic is \(s(p,F)=S_F(p)\), then \(F\sim K\) if and only if \(\zeta_F=\zeta_K\), and if \(s(p,F)=\min S_F(p)\), then \(F\sim K\) if and only if \(N_F=N_K\), where \(N_F\subset\mathbb{N}\) is the set of positive integers \(n\) such that \(n^{-s}\) has non-zero coefficient in the expression of \(\zeta_F\) as a Dirichlet series.  A recurring theme of this niche has been emptiness of the exceptional set of a splitting statistic inducing a given notion.  Indeed, emptiness has been shown a guarantee for every case in which the issue has been addressed.  We are interested in whether or not this phenomenon holds across all notions of arithmetic similarity.  It turns out the answer is no (c.f. Theorem \ref{main} and Section \ref{nempty}).  We are also interested in developing a common perspective from which emptiness may be proven in the known cases.\\\\ To the author's knowledge, the following is an exhaustive list of notions of arithmetic similarity that have appeared in the literature.  Number fields \(F\) and \(K\) are

\begin{enumerate} 
	\item\label{q} \textbf{arithmetically equivalent} if \(S_F(p)=S_K(p)\) for cofinitely many \(p\)
	\item\label{k} \textbf{Kronecker equivalent} if \(1\in S_F(p)\) if and only if \(1\in S_K(p)\) for cofinitely many \(p\)
	\item\label{wk} \textbf{weakly Kronecker equivalent} if \(\gcd(S_{F}(p))=\gcd(S_{K}(p))\) for cofinitely many \(p\)
	\item\label{uc} \textbf{ultra-coarsely arithmetically equivalent} if \(\lcm(S_{F}(p))=\lcm(S_{K}(p))\) for cofinitely many \(p\).
\end{enumerate}
\begin{remark}
Weak Kronecker equivalence has also been called \textit{local gcd equivalence} in \cite{linowitz_locally_2017}.
\end{remark}
Emptiness of the exceptional set was shown for \ref{q} using a functional equation and analytic properties of Dedekind zeta functions \cite{perlis_equation_1977}.  Emptiness was established for \ref{k} by explicit calculation of a certain linear representation \cite{lochter_new_1995}. For \ref{wk}, emptiness followed from a detailed analysis of specialized functions \cite{lochter_weakly_1994}. In \cite{phagan_corresponding_2024}, we introduced notion \ref{uc} and did not address the emptiness issue.  We give new proofs (from first principles) of emptiness of the exceptional set for \ref{q}-\ref{wk} from a unified perspective using only elementary properties of group actions.  This is desirable, as the splitting type admits a natural description through orbits of group actions (c.f. \cite{perlis_equation_1977, serre_local_1979} or proof of Theorem \ref{qeq} in this note).  We also prove 
\begin{theorem}\label{main}
There exist notions of arithmetic similarity admitting a non-empty exceptional set.
\end{theorem}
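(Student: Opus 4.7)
The plan is to construct explicitly a splitting statistic $s$ together with number fields $F$ and $K$ whose $s$-values agree at cofinitely many primes but differ at at least one (necessarily ramified) prime. The argument splits into a group-theoretic step followed by a number-theoretic realization.

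For the group theory, I will seek a finite group $G$, subgroups $H_1, H_2 \leq G$, and a function $f\colon \mathcal{M}_{\mathbb{N}}^f \to \mathcal{M}_{\mathbb{N}}^f$ obeying the uniform boundedness axioms required of a splitting statistic, with two properties. First, for every cyclic subgroup $C \leq G$, the value of $f$ on the multiset of $C$-orbit sizes on $G/H_1$ equals its value on the corresponding multiset on $G/H_2$. Second, for at least one pair $I \trianglelefteq D \leq G$ (to be realized as inertia inside decomposition group at the exceptional prime), the splitting data produced by the $D$-action on $G/H_i$, refined by $I$, is distinguished by $f$ between $i=1$ and $i=2$. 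Since notions \ref{q}--\ref{uc} are shown elsewhere in the paper to have empty exceptional sets, the target $f$ must lie outside the class handled by those results; the paper's cyclic Gassmann augmentation is exactly the diagnostic tool for identifying which $f$ are admissible.

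For the arithmetic realization, I will invoke inverse Galois theory (Shafarevich's theorem, should $G$ be chosen solvable, or an explicit polynomial construction otherwise) together with a local-prescription result of Grunwald--Wang type, producing a number field $L$ with $\Gal(L/\mathbb{Q}) \cong G$ whose decomposition group at a chosen rational prime $p_0$ is $D$ with inertia $I$. Set $F = L^{H_1}$ and $K = L^{H_2}$. For every prime unramified in $L$ (that is, all but finitely many), the splitting type is read off the cyclic Frobenius action on $G/H_i$, so the first property forces $s(p,F) = s(p,K)$; Chebotarev makes this condition substantive by guaranteeing that every cyclic subgroup of $G$ is actually hit by some unramified Frobenius. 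At $p_0$, the second property forces $s(p_0,F) \neq s(p_0,K)$, placing $p_0$ in a non-empty exceptional set.

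The main obstacle is the group-theoretic step: exhibiting a pair $(H_1, H_2)$ and a nontrivial $f$ that are cyclically indistinguishable on $G/H_1$ and $G/H_2$ yet genuinely distinguishable through some non-cyclic $D$. This is exactly the gap between cyclic-Gassmann and full Gassmann equivalence. The cyclic-action formula announced in the abstract should pin down which weakened splitting statistics are forced to agree at \emph{every} prime once they agree at the unramified ones; one then chooses $f$ just outside that class while still respecting the boundedness axioms. Once the tuple $(G, H_1, H_2, D, I, f)$ is in hand, the arithmetic realization is standard, and the theorem follows.
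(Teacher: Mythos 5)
There is a genuine gap: your proposal is a framework, not a proof. The theorem is an existence statement, and you never actually exhibit the tuple $(G,H_1,H_2,D,I,f)$ whose existence you reduce everything to; you explicitly flag this as ``the main obstacle'' and leave it unresolved. Until that object is produced, nothing has been proven. Moreover, the heavy machinery you invoke for the arithmetic realization (Shafarevich, Grunwald--Wang) is unnecessary, and the group-theoretic desideratum you set up --- a statistic that is cyclically indistinguishable on $G/H_1$ and $G/H_2$ yet distinguished by a non-cyclic decomposition group --- is more restrictive than what the theorem requires. For a sufficiently coarse splitting statistic, the cofinite agreement can be automatic for essentially trivial reasons, with no Gassmann-type coincidence between $H_1$ and $H_2$ needed at all.

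The paper's proof is two lines of explicit computation. For ultra-coarse arithmetic equivalence (notion \ref{uc}), a field and its Galois closure are always equivalent, so one takes $F_1=\mathbb{Q}(\sqrt[3]{2})$ with Galois closure $K=\mathbb{Q}(\sqrt[3]{2},\sqrt{-3})$ and checks via Dedekind--Kummer that $S_{F_1}(2)=\{1\}$ while $S_K(2)=\{2\}$, so the lcm's differ at $p=2$. Here $H_2$ is trivial and no inverse Galois theory is needed. The second example is even starker: the statistic indicating whether $S_K(p)=\{1\}$ partitions $\mathcal{N}$ into $\{\mathbb{Q}\}$ and its complement (since total ramification occurs at only finitely many primes), so any two fields $\neq\mathbb{Q}$ are equivalent, and $\mathbb{Q}(i)$ versus $\mathbb{Q}(i,\zeta_3)$ at $p=2$ gives a non-empty exceptional set. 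If you want to salvage your approach, you should either carry out the construction of your tuple explicitly or recognize that choosing a coarse enough $f$ collapses the problem to finding two concrete fields with visibly different ramification behavior at a single prime.
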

The next paragraph summarizes the structure of the present note.\\\\  In Section \ref{group}, we record some relevant group theory and prove Theorem \ref{formula}, which could be of independent interest. In Section \ref{galois}, we show that equality of splitting statistics at cofinitely many primes forces certain relations between Galois groups.  This frames characterizations of the splitting type that are particularly amenable to group-theoretic probing--even at ramified primes--and which are used to establish emptiness of the exceptional sets for the statistics inducing notions \ref{q}-\ref{wk} in Section \ref{empty}.  The technical cornerstone of our approach lies in Theorem \ref{formula} and Lemma \ref{charactersandresidues}.  Indeed, the former adapts a technique due to Gassmann to account for the possibility of ramification, and the latter is an observation tailor-made for studying the arithmetic implications of conjugacy covering relations between (subgroups of) Galois groups.  In Section \ref{nempty}, we show that the exceptional set can be non-empty for notion \ref{uc} and another notion, and we conclude with some parting thoughts on a general study of notions of arithmetic similarity in Section \ref{parting}.

\subsection*{Acknowledgements}
I would like to thank my advisor, Ben McReynolds, for all his help, and for suggesting the line of inquiry which ultimately led to this project.  I would also like to thank Andy Booker for helpful feedback on an earlier draft.

\section{Some Group Theory}\label{group}
In what follows, \(G,G_1,G_2\) are finite groups. For a subset \(A\subset G\), we will use the notation \(A^m\) for the set \(\{a^m:\text{ }a\in A\}\).  When \(A=C\) is a cyclic group and \(m\) divides \(|C|\), note that \(C^m\) is the subgroup of \(C\) of index \(m\).
\begin{definition}
The subgroups \(G_1\) and \(G_2\) are \textbf{almost conjugate} in \(G\) if \(|g^G\cap G_1|=|g^G\cap G_2|\) for all \(g\in G\), where \(g^G\) is the conjugacy class of \(g\) in \(G\).
\end{definition}
\begin{definition}
The subgroups \(G_1\) and \(G_2\) \textbf{conjugacy cover} one another in \(G\) if $$
\bigcup\limits_{g\in G}gG_1g^{-1}=\bigcup\limits_{g\in G}gG_2g^{-1}.
$$
\end{definition}
The following lemma is well-known, but we give a proof for the sake of completeness.  
\begin{lemma}\label{SylowPullBack}
If \(\pi:G\rightarrow H\) is an epimorphism of finite groups, then a subgroup of \(H\) is Sylow \(p\) if and only if it is of the form \(\pi(G_p)\), where \(G_p\) is a Sylow \(p\)-subgroup of \(G\).
\end{lemma}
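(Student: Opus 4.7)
The plan is to handle the two directions separately, using throughout that $K := \ker\pi$ is normal in $G$ so that $G/K \cong H$.

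For the forward direction, given a Sylow $p$-subgroup $G_p$ of $G$, the image $\pi(G_p)$ is certainly a $p$-subgroup of $H$ since it is a quotient of $G_p$. To show it has the maximal possible $p$-order in $H$, I would write $|\pi(G_p)| = |G_p|/|G_p \cap K|$ and reduce the problem to showing that $G_p \cap K$ is a Sylow $p$-subgroup of $K$. For this, pick any Sylow $p$-subgroup $P$ of $K$; by Sylow's theorem in $G$, $P$ is contained in a conjugate $gG_pg^{-1}$, so $g^{-1}Pg$ is a $p$-subgroup of $G_p$ sitting inside $g^{-1}Kg = K$ (normality of $K$), hence inside $G_p \cap K$. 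This forces $|G_p \cap K|$ to be at least the $p$-part of $|K|$, and it cannot be more because $G_p \cap K$ is a $p$-subgroup of $K$. Writing $|G| = |H|\cdot|K|$ and comparing $p$-parts then yields $|\pi(G_p)| = |G_p|/|G_p \cap K| = (|G|)_p/(|K|)_p = (|H|)_p$, as required.

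For the reverse direction, given a Sylow $p$-subgroup $P$ of $H$, I would pass to the preimage $\pi^{-1}(P)$, whose order is $|P|\cdot|K|$. Pick a Sylow $p$-subgroup $Q$ of $\pi^{-1}(P)$; then $|Q|$ is the $p$-part of $|P|\cdot|K|$, which by the calculation above equals the $p$-part of $|G|$, so $Q$ is in fact a Sylow $p$-subgroup of $G$. Its image $\pi(Q)$ sits inside $P$ and, by the forward direction just proved, is itself a Sylow $p$-subgroup of $H$; equality of orders then gives $\pi(Q) = P$.

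The entire argument rests on the interaction of Sylow subgroups across the short exact sequence $1 \to K \to G \to H \to 1$, and the only slightly subtle step is showing $G_p \cap K$ is Sylow in $K$; this is where the normality of $K$ is essential, since without it the conjugation trick used to land inside $G_p \cap K$ would fail. Everything else is bookkeeping with the multiplicativity of $p$-parts of orders.
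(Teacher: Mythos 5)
Your proof is correct and follows essentially the same route as the paper's: both directions reduce to showing that the intersection of a Sylow $p$-subgroup with the kernel $K=\ker\pi$ is Sylow in $K$, and the reverse direction likewise passes to a Sylow $p$-subgroup of the preimage $\pi^{-1}(P)$ and compares orders. The only divergence is in how that intersection is shown to be Sylow---you invoke Sylow's conjugacy theorem together with normality of $K$, whereas the paper counts indices via the isomorphism of $K$-sets $KG_p/G_p \simeq K/(K\cap G_p)$---but this is a cosmetic variation on the same argument.
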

\begin{proof}
Let \(G_p\subset G\) be a Sylow \(p\)-subgroup, \(H'=\pi(G_p)\), and \(N=\ker\pi\).  There is an isomorphism of \(N\)-sets 
$$
NG_p/G_p\simeq N/N\cap G_p,
$$
and clearly \([NG_p:G_p]\) is not divisible by \(p\), so \(N\cap G_p\subset N\) is a Sylow \(p\)-subgroup of \(N\), which we will denote \(N_p\).   Since there is an isomorphism \(H'\simeq G_p/N_p\), we conclude that \(H'\) is Sylow \(p\) in \(H\), by order considerations.  Now, let \(H_p\) be a Sylow \(p\)-subgroup of \(H\), and let \(G'=\pi^{-1}(H_p)\). If \(G'_p\) is a Sylow \(p\)-subgroup of \(G'\), then \(N_p=N\cap G'_p\) is a Sylow \(p\)-subgroup of \(N\), by our previous reasoning.  But then 
$$
G'_p/N_p\simeq H_p,
$$
so \(G'_p\) is in fact a Sylow \(p\)-subgroup of \(G\), by order considerations.  
\end{proof}

We conclude this section with a formula encoding the combinatorics of a large class of group actions, which might be of independent interest.  Let \(C\) be a finite cyclic group of order \(n\), and let \(S\) be a finite (left) \(C\)-set. Given a divisor \(m\) of \(n\), let \(M_m=|C^m\backslash S|\), and for integers \(t,k\), define 
$$
f_k(t)=\prod\limits_{p|\gcd(k,t)\text{ prime}}p^{v_p(t)},
$$
where \(v_p\) is the \(p\)-adic valuation, so that
$$
v_p(f_k(t))=\begin{cases}
v_p(t), & p|k\text{ and }p|t\\
0, & \text{otherwise}
\end{cases}
$$
\begin{lemma}\label{tr}
If \(T|m|n\), and \(\frac{m}{T}\) is square-free, and \(\Omega\) is the prime omega function counting prime divisors of a positive integer with multiplicity, then
\begin{equation}\label{treq}
\frac{\sum\limits_{T|d|m}(-1)^{\Omega(md)}M_{d}}{\prod\limits_{q|\frac{m}{T} \text{ prime}}(q-1)}=\sum\limits_{\frac{m}{T}f_{m/T}(T)|d|n}\gcd(T,d)a_{d},
\end{equation}
where \(a_d\) is the number of elements with fiber of cardinality \(d\) in the natural projection \(S\rightarrow C\backslash S\), i.e. \(a_d\) is the number of \(C\)-orbits in \(S\) with cardinality \(d\).
\end{lemma}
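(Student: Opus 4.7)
The plan is to expand both sides as $\mathbb{Z}$-linear combinations of the orbit counts $a_e$ and to compare the coefficient of each $a_e$ prime-by-prime, exploiting the square-free hypothesis on $m/T$.

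First, I would establish the elementary orbit identity $M_m=\sum_{d\mid n}a_d\gcd(m,d)$. Indeed, a $C$-orbit $O$ of size $d$ has stabilizer equal to the unique index-$d$ subgroup $C^d$, and as a $C^m$-set $O$ decomposes into $|C^m\backslash C/C^d|=|C/C^{\gcd(m,d)}|=\gcd(m,d)$ sub-orbits. Substituting this into the left hand side of \eqref{treq} and swapping the order of summation reduces the lemma to the pointwise claim that for every $e\mid n$,
$$\sum_{T\mid d\mid m}(-1)^{\Omega(md)}\gcd(d,e)=\begin{cases}\gcd(T,e)\prod_{q\mid m/T}(q-1), & (m/T)f_{m/T}(T)\mid e,\\ 0, & \text{otherwise,}\end{cases}$$
with the product over primes.

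Next, let $P$ denote the set of prime divisors of $m/T$. Since $m/T$ is square-free, every $d$ in the range $T\mid d\mid m$ has the form $d_Q=T\prod_{q\in Q}q$ for a unique $Q\subseteq P$, with $\Omega(md_Q)=\Omega(m)+\Omega(T)+|Q|$; combined with $\Omega(m)=\Omega(T)+|P|$ this gives $\epsilon:=(-1)^{\Omega(m)+\Omega(T)}=(-1)^{|P|}$. Writing $\gcd(d_Q,e)=\prod_r r^{\min(v_r(T)+[r\in Q],\,v_r(e))}$ and observing that the exponent at $r$ depends on $Q$ only through $[r\in Q]$ for primes $r\in P$, the signed sum factors prime-by-prime as
$$\sum_{Q\subseteq P}(-1)^{|Q|}\gcd(d_Q,e)=\prod_{r\notin P}r^{\min(v_r(T),v_r(e))}\prod_{r\in P}\bigl(r^{\min(v_r(T),v_r(e))}-r^{\min(v_r(T)+1,v_r(e))}\bigr).$$

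The step I expect to be the main obstacle is matching the non-vanishing condition of this product with the peculiar divisibility condition involving $f_{m/T}(T)$. A short case split on each $r\in P$ shows that the $r$-th factor vanishes when $v_r(e)\leq v_r(T)$ and otherwise equals $-(r-1)r^{v_r(T)}$, so the product is nonzero exactly when $v_r(e)\geq v_r(T)+1$ for every $r\in P$. A direct valuation computation on $(m/T)f_{m/T}(T)$, splitting into the subcases $r\mid T$ and $r\nmid T$ (where $f_{m/T}(T)$ contributes $r$-adic valuation $v_r(T)$ and $0$ respectively), yields $v_r(T)+1$ in either case, so the non-vanishing condition is precisely $(m/T)f_{m/T}(T)\mid e$. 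In the non-vanishing regime, the $|P|$ minus signs combine with $\epsilon$ to $(-1)^{|P|}(-1)^{|P|}=1$, the $(r-1)$'s assemble into $\prod_{q\mid m/T}(q-1)$ (cancelling the denominator on the left hand side of \eqref{treq}), and since $v_r(e)>v_r(T)$ forces $\min(v_r(T),v_r(e))=v_r(T)$ for $r\in P$, the remaining prime powers combine with the $r\notin P$ contribution to $\prod_r r^{\min(v_r(T),v_r(e))}=\gcd(T,e)$, matching the right hand side.
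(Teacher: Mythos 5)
Your proposal is correct, and it takes a genuinely different route from the paper. The paper proves the identity by induction on \(\Omega(m/T)\): the base case \(T=m\) is the same orbit-counting identity \(M_m=\sum_{d\mid n}\gcd(m,d)a_d\) you start from, and the inductive step splits the alternating sum over \(T\mid d\mid m\) into the pieces with \(pT\mid d\) and \(d\mid m/p\) for a chosen prime \(p\mid m/T\), then uses the dichotomy \(\gcd(pT,d)\in\{\gcd(T,d),\,p\gcd(T,d)\}\) together with the identity \(f_{m/pT}(pT)=f_{m/pT}(T)\) to peel off one factor of \((p-1)\) at a time. You instead expand every \(M_d\) at once, swap the order of summation, and evaluate the coefficient of each \(a_e\) in closed form as a product of local factors over primes, using square-freeness of \(m/T\) to parametrize the divisors \(T\mid d\mid m\) by subsets \(Q\subseteq P\) and to factor the signed sum. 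Your valuation checks are right: the \(r\)-th factor for \(r\in P\) vanishes iff \(v_r(e)\le v_r(T)\), and \(v_r\bigl((m/T)f_{m/T}(T)\bigr)=v_r(T)+1\) in both subcases \(r\mid T\) and \(r\nmid T\), so the support condition matches; the sign bookkeeping \((-1)^{|P|}(-1)^{|P|}=1\) and the reassembly of \(\gcd(T,e)\) are also correct. The trade-off: the paper's induction keeps each step short but makes the divisibility condition \(\tfrac{m}{T}f_{m/T}(T)\mid d\) look like an artifact of the recursion, whereas your direct computation explains it conceptually as the exact non-vanishing locus of a product of local factors, at the cost of a heavier one-shot valuation computation.
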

\begin{proof}
We induct on \(\Omega(m/T)\).  If \(\Omega(m/T)=0\), Equation \ref{treq} reduces to \(M_m=\sum\limits_{d|n}\gcd(m,d)a_{d}\).  By definition \(M_n=|S|=\sum\limits_{d|n}da_{d}\), and if \(\omega\in S\) has \(|C\omega|=d\), then by orbit-stabilizer \(\text{Stab}_C(\omega)=C^{d}\).  For such \(\omega\), we know that \(|C^{m}\omega|=[C^m:C^{d}\cap C^m]=\frac{\lcm(m,d)}{m},\) so 
$$
M_{m}=\sum\limits_{d|n}\frac{md}{\lcm(m,d)}a_{d}=\sum\limits_{d|n}\gcd(m,d)a_{d},
$$
and the desired formula holds.  Suppose that \(\Omega(m/T)>0\), and observe that, since \(m/T\) is square-free, if \(p|\frac{m}{T}\) we have 
$$
\sum\limits_{T|d|m}(-1)^{\Omega(md)}M_{d}=\sum\limits_{pT|d|m}(-1)^{\Omega(md)}M_{d}+\sum\limits_{T|d|\frac{m}{p}}(-1)^{\Omega(md)}M_{d},
$$
so by induction
$$
\frac{\sum\limits_{T|d|m}(-1)^{\Omega(md)}M_{d}}{\prod\limits_{q|\frac{m}{pT} \text{ prime}}(q-1)}=\sum\limits_{\frac{m}{pT}f_{m/pT}(pT)|d|n}\gcd(pT,d)a_{d}-\sum\limits_{\frac{m}{pT}f_{m/pT}(T)|d|n}\gcd(T,d)a_{d}
$$
$$
=\sum\limits_{\frac{m}{pT}f_{m/pT}(T)|d|n}(\gcd(pT,d)-\gcd(T,d))a_{d}
$$
$$
=(p-1)\sum\limits_{\frac{m}{T}f_{m/T}(T)|d|n}\gcd(T,d)a_{d}.
$$
The second equation holds because \(f_{m/pT}(pT)=f_{m/pT}(T)\), and the last equation holds because \(\gcd(pT,d)=\gcd(T,d)\) if and only if \(v_p(d)\leq v_p(T)\), while \(\gcd(pT,d)=p\gcd(T,d)\) otherwise.
\end{proof}
Given \(T,m,n\) as in the lemma and such that \(\frac{m}{T}\) is maximal square-free (note this uniquely determines \(T\)), set 
$$
N_m=\frac{1}{T}\frac{\sum\limits_{T|d|m}(-1)^{\Omega(md)}M_{d}}{\prod\limits_{q|\frac{m}{T} \text{ prime}}(q-1)},
$$
and observe that
\begin{equation}\label{max}
N_m=\sum_{m|d|n}a_{d}.
\end{equation}

Indeed, any prime factor of \(T\) divides \(\frac{m}{T}\), by choice of \(T\).  Therefore, \(f_{m/T}(T)=T\).  Furthermore, since in this case \(m\) divides \(d\) in the right hand side of Equation \ref{treq}, so does \(T\), so \(\gcd(T,d)=T\).  Hence, Equation \ref{max} follows from Lemma \ref{tr}.

\begin{theorem}\label{formula}
If \(m\) is a divisor of \(n\), then
$$
a_m=\sum\limits_{d|\frac{n}{m}}\mu(d)N_{md},
$$
where \(\mu\) is the M{\"o}bius function.
\end{theorem}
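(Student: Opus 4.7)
The plan is to recognize the claim as a M\"obius inversion of Equation (\ref{max}), applied to the divisibility poset on the divisors of $n$. Once Equation (\ref{max}) is in hand, no further analysis of the group action on $S$ is required; the theorem reduces to a formal manipulation of divisor sums.

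I would begin by restating the content of Equation (\ref{max}) as the assertion that $m \mapsto N_m$ is the upward sum of $m \mapsto a_m$ along divisibility: for every divisor $m$ of $n$,
\begin{equation*}
N_m \;=\; \sum_{m \mid d \mid n} a_d.
\end{equation*}
This is precisely the setting in which classical M\"obius inversion on the lattice of divisors of $n$ applies, with $a_m$ the unknown and $N_m$ the known aggregate.

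Next, I would carry out the inversion by direct substitution and a swap of summation. Each divisor of $n$ that is a multiple of $m$ can be written uniquely as $me'$ with $e' \mid n/m$, and the condition $md \mid me'$ amounts to $d \mid e'$. Substituting into the candidate right-hand side gives
\begin{equation*}
\sum_{d \mid n/m} \mu(d)\, N_{md} \;=\; \sum_{d \mid n/m} \mu(d) \sum_{\substack{e' \mid n/m \\ d \mid e'}} a_{me'} \;=\; \sum_{e' \mid n/m} a_{me'} \sum_{d \mid e'} \mu(d).
\end{equation*}
The inner M\"obius sum vanishes unless $e' = 1$, in which case it equals $1$, so only the $e'=1$ term survives and the whole expression collapses to $a_m$.

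The only step deserving a moment of care is the re-indexing: one must verify that the double sum ranges over pairs $(d, e')$ with $d, e' \mid n/m$ and $d \mid e'$ exactly once, which is immediate from unique factorization in $\mathbb{Z}$. Beyond that routine check, Theorem \ref{formula} is a direct corollary of Equation (\ref{max}); the conceptual obstacle has already been cleared by Lemma \ref{tr}, which is what makes the telescoping identity $N_m = \sum_{m \mid d \mid n} a_d$ available in the first place.
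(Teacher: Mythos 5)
Your proposal is correct and takes essentially the same route as the paper: both deduce the theorem from Equation (\ref{max}) by M\"obius inversion on the divisor lattice of $n$, the only cosmetic difference being that you carry out the inversion explicitly by swapping the order of summation, while the paper substitutes $f(m)=N_{n/m}$, $g(m)=a_{n/m}$ and cites the standard inversion formula. Your re-indexing and the vanishing of $\sum_{d\mid e'}\mu(d)$ for $e'>1$ are both correct, so nothing is missing.
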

\begin{proof}
Let \(f(m)=N_{\frac{n}{m}}\) and \(g(m)=a_{\frac{n}{m}}\).  Equation \ref{max} can be rewritten
$$
f(m)=\sum\limits_{d|m}g(d).
$$
Applying the M{\"o}bius inversion formula proves the claim.
\end{proof}
Theorems \ref{formula} and \ref{qeq} taken together complement the argument, originally due to Gassmann, around Lemma 1 (the Gassmann Lemma) of \cite{perlis_equation_1977}.  Indeed, the latter famously shows that almost conjugacy is equivalent to an equality of splitting types at all unramified primes, but the reasoning there breaks down when there is ramification, so the functional equation must be used to prove emptiness of the exceptional set.  Our results allow one to deduce equality of splitting types at both unramified and ramified primes as a combinatorial consequence of an almost conjugacy assumption.  In fact, Theorem \ref{formula} may be used in tandem with the Gassmann Lemma to recover the following characterization of almost conjugacy (for others, see \cite{sutherland_stronger_nodate} Proposition 2.6), which is essentially the main theorem of \cite{stuart_new_1995}.
\begin{corollary}
Subgroups \(G_1\) and \(G_2\) of \(G\) are almost conjugate if and only if for each cyclic subgroup \(C\) of \(G\), there is an equality of cardinalities \(|C\backslash G/G_1|=|C\backslash G/G_2|\).
\end{corollary}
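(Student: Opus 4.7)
The plan is to bridge the two conditions using Theorem \ref{formula} for the converse direction and Burnside's lemma for the forward direction, then close the loop via the Gassmann Lemma.

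For the forward implication, suppose $G_1$ and $G_2$ are almost conjugate, and let $C \le G$ be cyclic. Burnside's lemma gives
$$|C\backslash G/G_i| = \frac{1}{|C|}\sum_{c \in C}|\mathrm{Fix}_c(G/G_i)|.$$
A standard double count yields $|\mathrm{Fix}_c(G/G_i)| = |c^G \cap G_i|\cdot|C_G(c)|/|G_i|$. Almost conjugacy directly supplies $|c^G \cap G_1| = |c^G \cap G_2|$ for every $c$, and in particular forces $|G_1| = |G_2|$ (sum over conjugacy classes of $G_i$), so each summand agrees on $i = 1, 2$ and the double coset counts coincide.

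For the converse, fix a cyclic subgroup $C \le G$ of order $n$. Because $C^m$ is the unique index-$m$ subgroup of the cyclic group $C$ whenever $m \mid n$, each $C^m$ is itself a cyclic subgroup of $G$, and the hypothesis applies to all of them at once, giving $M_m^{(1)} = M_m^{(2)}$ for every $m \mid n$, where $M_m^{(i)} := |C^m \backslash G/G_i|$. Feeding these matching sequences into Theorem \ref{formula} (applied to the $C$-sets $S = G/G_i$) yields $a_d^{(1)} = a_d^{(2)}$ for every $d \mid n$, where $a_d^{(i)}$ is the number of $C$-orbits of cardinality $d$ in $G/G_i$. Specializing to $d = 1$, and noting that a coset is fixed by a generator $g$ of $C$ precisely when its $C$-orbit is a singleton, we conclude $|\mathrm{Fix}_g(G/G_1)| = |\mathrm{Fix}_g(G/G_2)|$. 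Letting $C$ range over all cyclic subgroups of $G$ covers every $g \in G$, so fixed-point counts of the permutation actions on $G/G_1$ and $G/G_2$ agree pointwise. The Gassmann Lemma then converts this pointwise equality of permutation characters back into almost conjugacy.

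The only genuine subtlety is the passage from the coarse invariant $|C\backslash G/G_i|$ (a total orbit count) to the full orbit-size distribution $(a_d^{(i)})_{d\mid n}$, which is exactly what Theorem \ref{formula} accomplishes; everything else is a routine stitching together of Burnside's lemma, the $d=1$ specialization, and the Gassmann Lemma.
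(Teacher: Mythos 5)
Your proof is correct and follows essentially the same route as the paper: the forward direction is the permutation-character computation underlying the Gassmann Lemma (via Burnside), and the converse feeds the double coset counts for all $C^m$ into Theorem \ref{formula} to recover the $C$-orbit-size distribution of $G/G_i$, then invokes the Gassmann Lemma. Your specialization to $d=1$ (singleton orbits as fixed points of a generator) is just a slightly more explicit way of extracting the permutation character than the paper's appeal to equality of the full coset types.
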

\begin{proof}
That almost conjugacy implies the cardinality equality follows directly from the Gassmann Lemma.  If \(|C\backslash G/G_1|=|C\backslash G/G_2|\) for every cyclic subgroup \(C\) of \(G\), then for a specific cyclic group \(C\subset G\), we know that \(|C^m\backslash G/G_1|=|C^m\backslash G/G_2|\) for every integer \(m\), so the coset types of \(C\backslash G/G_1\) and \(C\backslash G/G_2\) are the same, by Theorem \ref{formula}.  Almost conjugacy then follows from the Gassmann Lemma.
\end{proof}

\section{Some Galois Theory}\label{galois}
We fix some notation before proceeding.  Given a place \(\nu\) of \(F\), we denote the residue field at \(\nu\) by \(\kappa(\nu)\), and if \(E/F\) is a finite extension (not necessarily Galois) with \(\omega\) a place of \(E\) over \(\nu\), we denote the group of the cyclic extension \(\kappa(\omega)/\kappa(\nu)\) by \(\mathfrak{g}_{\omega/\nu}\). Recall that \(f\) is a residual degree of \(E\) over \(\nu\) if and only if there is \(\omega\) over \(\nu\) such that \(|\mathfrak{g}_{\omega/\nu}|=f\). If \(E/F\) is Galois with group \(G\), we identify \(\text{Gal}(E_\omega/F_\nu)\) with the decomposition group \(\mathcal{D}_{\omega/\nu}\subset G\). Finally, recall that the inertia subgroup \(\mathcal{I}_{\omega/\nu}\) is defined as the kernel of the canonical epimorphism \(\mathcal{D}_{\omega/\nu}\rightarrow\mathfrak{g}_{\omega/\nu}\), and if \(F'\) is a subfield of \(E\) containing \(F\), with a place \(\eta\) divisible by \(\omega\), then \(\mathcal{D}_{\omega/\eta}=\mathcal{D}_{\omega/\nu}\cap G_{F'}\) and \(\mathcal{I}_{\omega/\eta}=\mathcal{I}_{\omega/\nu}\cap G_{F'}\) (c.f. Proposition 22.a in Chapter I of \cite{serre_local_1979}). \\\\
To simplify notation, given a Galois extension \(K/\mathbb{Q}\) with group \(G\) and a place \(\omega\) of \(K\) over \(p\), we will write \(\mathcal{D}, \mathcal{I}, \mathfrak{g}\), respectively, instead of \(\mathcal{D}_{\omega/p}, \mathcal{I}_{\omega/p}, \mathfrak{g}_{\omega/p}\), so long as it won't lead to confusion. Furthermore, we arrange that \(\mathcal{D}=\mathcal{I}C\), where \(C\) is the cyclic group generated by some preimage \(c\) of a generator of \(\mathfrak{g}\) under the projection \(\mathcal{D}\rightarrow\mathfrak{g}\).  Observe that if \(F\) is a subfield of \(K\), and \(\nu\) is the place of \(F\) divisible by \(\omega\), then \(\mathcal{I}\mathcal{D}_{\omega/\nu}\) is the kernel of the composition \(\mathcal{D}\rightarrow\mathfrak{g}\rightarrow\mathfrak{g}_{\nu/p}\), so that there is \(g\in G\) such that \(g\mathcal{I}c^mg^{-1}\cap G_F\neq\emptyset\) if and only if there is \(f\in S_F(p)\) such that \(f\) divides \(m\).  We record this as a lemma, highlighting the generic case that \(p\) is unramified in \(K\).  Recall that \(\Ind_H^G(1_H)(g)\neq 0\) if and only if \(g^G\cap H\neq\emptyset\).

\begin{lemma}\label{charactersandresidues}
With notation as above, the kernel of the natural map \(\mathcal{D}\rightarrow\mathfrak{g}_{\nu/p}\) is \(\mathcal{I}\mathcal{D}_{\omega/\nu}\), and if \(m\) is a positive integer, there is \(g\in G\) such that $$g\mathcal{I}c^mg^{-1}\cap G_F\neq\emptyset$$ if and only if there is \(f\in S_F(p)\) such that \(f|m\).  In particular, if \(p\) is unramified in \(K\), then \(\chi_F(F_p^m)\neq 0\) if and only if there is \(f\in S_F(p)\) such that \(f|m\), where \(\chi_F=\Ind_{G_{F}}^G(1_{G_F})\), and \(F_p\subset G\) is the Frobenius class of \(p\).
\end{lemma}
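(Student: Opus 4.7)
\emph{Proof plan.} I would treat the three claims in order, since the kernel identification in the first part provides the engine for the second, which then specializes immediately to the third.

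For the kernel description, I would factor the map $\mathcal{D}\to\mathfrak{g}_{\nu/p}$ as the canonical reduction $\mathcal{D}\twoheadrightarrow\mathfrak{g}$ (kernel $\mathcal{I}$) followed by the restriction $\mathfrak{g}=\Gal(\kappa(\omega)/\mathbb{F}_p)\twoheadrightarrow\Gal(\kappa(\nu)/\mathbb{F}_p)=\mathfrak{g}_{\nu/p}$ (kernel $\mathfrak{g}_{\omega/\nu}$). Using the intersection formulas $\mathcal{D}_{\omega/\nu}=\mathcal{D}\cap G_F$ and $\mathcal{I}_{\omega/\nu}=\mathcal{I}\cap G_F$ recalled in the preamble, the restriction of $\mathcal{D}\twoheadrightarrow\mathfrak{g}$ to $\mathcal{D}_{\omega/\nu}$ is surjective onto $\mathfrak{g}_{\omega/\nu}$ with kernel $\mathcal{I}_{\omega/\nu}$, so the preimage of $\mathfrak{g}_{\omega/\nu}$ in $\mathcal{D}$ coincides with $\mathcal{I}\mathcal{D}_{\omega/\nu}$.

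For the main equivalence, I would invoke $G$-transitivity on the places of $K$ above $p$: every place $\nu'$ of $F$ above $p$ has the form $g\omega|_F$ for some $g\in G$, and the decomposition and inertia data at $g\omega$ are $g\mathcal{D}g^{-1}$ and $g\mathcal{I}g^{-1}$, with $gcg^{-1}$ lifting a generator of $\mathfrak{g}_{g\omega/p}$. Applying the first part at $g\omega$, the kernel of $g\mathcal{D}g^{-1}\to\mathfrak{g}_{\nu'/p}$ is $g\mathcal{I}g^{-1}(g\mathcal{D}g^{-1}\cap G_F)$. Because the residual degree $f_{\nu'}$ is the order of the image of $gcg^{-1}$ in $\mathfrak{g}_{\nu'/p}$, the divisibility $f_{\nu'}\mid m$ is equivalent to $gc^mg^{-1}$ lying in this kernel, i.e.\ to the existence of $\iota\in\mathcal{I}$ and $d\in g\mathcal{D}g^{-1}\cap G_F$ with $gc^mg^{-1}=(g\iota g^{-1})d$; rearranging gives $d\in g\mathcal{I}c^mg^{-1}\cap G_F$. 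Quantifying over $\nu'$ on one side and $g$ on the other yields the claimed biconditional.

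The unramified case is then immediate: $\mathcal{I}=1$ collapses $g\mathcal{I}c^mg^{-1}$ to the singleton $\{gc^mg^{-1}\}$ and identifies $F_p^m$ with the conjugacy class of $c^m$ in $G$, so the main equivalence becomes $(c^m)^G\cap G_F\neq\emptyset$, which by the induced-character criterion recalled just before the lemma is the same as $\chi_F(F_p^m)\neq 0$. The principal obstacle is not conceptual but notational: one must carefully transport the kernel description from $\omega$ to an arbitrary conjugate $g\omega$, since the splitting type involves all places of $F$ above $p$, not just the one below the chosen $\omega$. Once the intersection formulas $\mathcal{D}_{g\omega/\nu'}=g\mathcal{D}g^{-1}\cap G_F$ and $\mathcal{I}_{g\omega/\nu'}=g\mathcal{I}g^{-1}\cap G_F$ are invoked, the remainder is formal manipulation.
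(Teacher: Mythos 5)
Your proposal is correct and follows essentially the same route the paper takes: it makes precise the observation stated immediately before the lemma (factoring \(\mathcal{D}\to\mathfrak{g}\to\mathfrak{g}_{\nu/p}\), identifying the kernel as \(\mathcal{I}\mathcal{D}_{\omega/\nu}\) via the intersection formulas, and then transporting to all places of \(F\) over \(p\) by \(G\)-conjugacy). The only value added is that you spell out the conjugation bookkeeping for places \(\nu'\neq\nu\), which the paper leaves implicit.
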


From now on, unless otherwise indicated, let \(K_1\) and \(K_2\) be number fields contained in a finite Galois extension \(K/\mathbb{Q}\) with group \(G\), and let \(G_F\) be the stabilizer in \(G\) of a subfield \(F\) of \(K\).  If \(K_1\) and \(K_2\) are \(s\)-equivalent for some splitting statistic \(s\), then the equality \(s(p,K_1)=s(p,K_2)\) at cofinitely many \(p\) implies equality over all primes unramified in \(K\).  Chebotarev density allows us to parlay this observation to relate \(G_{K_1}\) and \(G_{K_2}\) when \(s\) is a splitting statistic governing one of notions \ref{q}-\ref{uc} above.  The relations here are not new, but the approach is.  We record them for the sake of completeness, starting with a classic.
\begin{proposition}\label{qeqg}
If \(K_1\) and \(K_2\) are arithmetically equivalent, then \(G_{K_1}\) and \(G_{K_2}\) are almost conjugate in \(G\).
\end{proposition}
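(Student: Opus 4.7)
The plan is to run the classical Gassmann argument with Chebotarev density, using Lemma \ref{charactersandresidues} in its unramified incarnation to translate splitting data into character values of the permutation representations $\chi_{K_i} = \Ind_{G_{K_i}}^G(1_{G_{K_i}})$. The key observation is that arithmetic equivalence, by the remark following the definition of exceptional set, forces $S_{K_1}(p) = S_{K_2}(p)$ at every prime unramified in $K/\mathbb{Q}$, and these are exactly the primes at which Chebotarev furnishes Frobenius elements ranging over all conjugacy classes of $G$.

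First I would fix an arbitrary $\sigma \in G$ and, by Chebotarev, select an unramified rational prime $p$ whose Frobenius class $F_p$ equals the conjugacy class $\sigma^G$. For such $p$ the decomposition group $\mathcal{D}$ at any place above $p$ is cyclic, generated by a conjugate of $\sigma$, and the multiset $S_{K_i}(p)$ is the multiset of orbit lengths of $\langle \sigma \rangle$ acting on $G/G_{K_i}$. Equality of splitting types at this $p$ therefore says that $\langle\sigma\rangle$ has the same orbit partition on $G/G_{K_1}$ as on $G/G_{K_2}$. Reading off orbits of size dividing a given integer $m$ via Lemma \ref{charactersandresidues} (unramified case), I obtain $\chi_{K_1}(\sigma^m) = \chi_{K_2}(\sigma^m)$ for every positive integer $m$; taking $m = 1$ and letting $\sigma$ vary gives $\chi_{K_1} = \chi_{K_2}$ as class functions on $G$.

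To finish, I would invoke the standard formula
$$
\chi_{K_i}(\sigma) \;=\; \frac{|G|}{|G_{K_i}|\,|\sigma^G|}\,\bigl|\sigma^G \cap G_{K_i}\bigr|.
$$
Evaluating at $\sigma = e$ yields $|G_{K_1}| = |G_{K_2}|$; for arbitrary $\sigma$ this then collapses to the identity $|\sigma^G \cap G_{K_1}| = |\sigma^G \cap G_{K_2}|$, which is precisely the definition of almost conjugacy of $G_{K_1}$ and $G_{K_2}$ in $G$.

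The only real friction is bookkeeping rather than conceptual: one must confirm that the cofinite equality of splitting statistics upgrades to equality at every unramified prime (this is the Chebotarev argument promised in the introduction, since the exceptional set is forced to sit inside the finite set of ramified primes of $K/\mathbb{Q}$), and then verify that the orbit interpretation of $S_{K_i}(p)$ at unramified $p$ is well-defined independent of the choice of place above $p$, which is immediate because the decomposition groups at different places above $p$ are conjugate and $G_{K_i}$ is being acted on by all of $G$.
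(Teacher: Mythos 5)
Your argument is correct and is precisely the classical Gassmann--Perlis argument that the paper itself invokes: its ``proof'' of Proposition \ref{qeqg} is just a citation to Lemma 1 of \cite{perlis_equation_1977}, which you have reconstructed (Chebotarev to realize every conjugacy class as a Frobenius class at an unramified prime, equality of orbit partitions of $\langle\sigma\rangle$ on $G/G_{K_i}$, hence equality of the induced characters, hence almost conjugacy). One cosmetic note: Lemma \ref{charactersandresidues} only records a vanishing/non-vanishing criterion for $\chi_F(F_p^m)$, so the equality of character values should be justified directly by the fixed-point count $\chi_{K_i}(\sigma^m)=\sum_{d\mid m} d\,a_d^{(i)}$ from the shared orbit-length multiset rather than by that lemma.
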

\begin{proof}
See Lemma 1 in \cite{perlis_equation_1977}.  
\end{proof}

\begin{proposition}\label{keqg}
If \(K_1\) and \(K_2\) are Kronecker equivalent, then \(G_{K_1}\) and \(G_{K_2}\) conjugacy cover one another in \(G\).
\end{proposition}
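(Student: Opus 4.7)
The plan is to translate the defining property of Kronecker equivalence (namely, equality of the indicator $[1 \in S_{K_i}(p)]$ for cofinitely many $p$) into a statement about Frobenius elements lying in the unions $U_i := \bigcup_{g \in G} g G_{K_i} g^{-1}$, and then let Chebotarev density force the two unions to coincide.

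First I would apply Lemma \ref{charactersandresidues} with $m = 1$: for an unramified prime $p$ in $K$, the inertia $\mathcal{I}$ is trivial, $c$ is a Frobenius lift, and the lemma says that $1 \in S_F(p)$ (i.e.\ there exists $f \in S_F(p)$ with $f \mid 1$) holds if and only if the Frobenius conjugacy class $F_p$ at $p$ meets $G_F$, equivalently $F_p \subset U_F := \bigcup_{g \in G} g G_F g^{-1}$. Specializing to $F = K_1$ and $F = K_2$, the Kronecker equivalence hypothesis is equivalent to the assertion that $F_p \subset U_{K_1}$ if and only if $F_p \subset U_{K_2}$ for cofinitely many $p$; as noted in the introduction, the exceptional set is in any case contained in the finitely many primes ramified in $K$, so this holds at every prime unramified in $K$.

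Next I would invoke Chebotarev density: every conjugacy class $C$ of $G$ arises as $F_p$ for infinitely many unramified primes $p$. Thus the biconditional above, evaluated for such $p$, yields $C \subset U_{K_1}$ if and only if $C \subset U_{K_2}$, for every conjugacy class $C$ of $G$. Since each $U_{K_i}$ is by construction a union of conjugacy classes of $G$, this equivalence yields $U_{K_1} = U_{K_2}$, which is exactly the statement that $G_{K_1}$ and $G_{K_2}$ conjugacy cover one another in $G$.

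There is no serious obstacle here: the only subtlety is the implicit step, already flagged in the discussion preceding Lemma \ref{charactersandresidues}, that equality of the splitting statistic at cofinitely many primes is automatically upgraded to equality at all primes unramified in $K$, so that Chebotarev density is actually applicable. Once that is in hand, the proof is essentially a one-line invocation of Lemma \ref{charactersandresidues} with $m=1$ combined with Chebotarev.
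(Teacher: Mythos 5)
Your proof is correct and follows essentially the same route as the paper's: apply Lemma \ref{charactersandresidues} with \(m=1\) at unramified primes to translate the condition \(1\in S_{K_i}(p)\) into the Frobenius class meeting \(G_{K_i}\), then use Chebotarev to realize every conjugacy class of \(G\) as a Frobenius class at infinitely many unramified primes and conclude that the unions of conjugates coincide. The only cosmetic difference is that the paper phrases the membership condition via the nonvanishing of the induced character \(\chi_{K_i}=\Ind_{G_{K_i}}^G(1_{G_{K_i}})\) rather than via the sets \(U_{K_i}\) directly.
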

\begin{proof}
For every \(g\in G\), there is a rational prime \(p\) unramified in \(K\) whose Frobenius class \(F_p\) is precisely the conjugacy class of \(g\) in \(G\), so that \(1\in S_{F}(p)\) if and only if \(\chi_F(g)\neq 0\), by Lemma \ref{charactersandresidues}.  Since \(K_1\) and \(K_2\) are Kronecker equivalent, we conclude that \(\chi_{K_1}(g)\neq 0\) if and only if \(\chi_{K_2}(g)\neq 0\), so \(G_{K_1}\) and \(G_{K_2}\) conjugacy cover one another in \(G\).
\end{proof}

\begin{proposition}\label{wkeqg}
If \(K_1\) and \(K_2\) are weakly Kronecker equivalent, then the Sylow subgroups of \(G_{K_1}\) and \(G_{K_2}\) conjugacy cover one another in \(G\).
\end{proposition}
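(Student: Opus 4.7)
The plan is to reduce the proposition to showing that, for each prime $q$, the set of $q$-elements of $G$ that are $G$-conjugate into $G_{K_1}$ coincides with the corresponding set for $G_{K_2}$. First I would verify the following elementary fact, which is the content of the Sylow reduction: if $P_i$ is a Sylow $q$-subgroup of $G_{K_i}$, then $\bigcup_{h\in G}hP_ih^{-1}$ equals exactly the set of $q$-elements of $G$ conjugate into $G_{K_i}$. The forward inclusion is immediate; for the reverse, any $q$-element conjugate into $G_{K_i}$ lies in \emph{some} Sylow $q$-subgroup of $G_{K_i}$, and all such Sylow subgroups are $G_{K_i}$-conjugate, hence $G$-conjugate, to $P_i$.

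Next I would fix a prime $q$ and a $q$-element $g\in G$. By Chebotarev density, I can pick a rational prime $p$ unramified in $K$ whose Frobenius class is $g^G$ and which lies in the cofinite set on which $\gcd(S_{K_1}(p))=\gcd(S_{K_2}(p))$. Since the order of Frobenius is $|g|$, a power of $q$, every residue degree in $S_{K_i}(p)$ divides a power of $q$, so $\gcd(S_{K_i}(p))$ is itself a power of $q$ and equals $1$ if and only if $1\in S_{K_i}(p)$. Applying the last part of Lemma \ref{charactersandresidues} with $m=1$, the condition $1\in S_{K_i}(p)$ is equivalent to $\chi_{K_i}(g)\neq 0$, which is equivalent to $g$ being $G$-conjugate into $G_{K_i}$.

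Chaining these equivalences yields: $g$ is conjugate into $G_{K_1}$ iff $\gcd(S_{K_1}(p))=1$ iff $\gcd(S_{K_2}(p))=1$ iff $g$ is conjugate into $G_{K_2}$. Combined with the initial reduction, this proves the proposition. There is no serious obstacle; the whole argument is a short combination of Chebotarev density, Lemma \ref{charactersandresidues}, and the Sylow conjugacy theorem. The one conceptual point worth highlighting is that a $q$-power Frobenius forces $\gcd(S_{K_i}(p))$ into $\{1,q,q^2,\dots\}$, so the single scalar check $\gcd=1$ is enough to extract the conjugacy information needed to compare Sylow $q$-subgroups.
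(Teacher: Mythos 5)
Your proposal is correct and follows essentially the same route as the paper: Chebotarev realizes a $q$-element as a Frobenius class, the $q$-power order forces all residue degrees (hence the gcd) to be powers of $q$ so that $\gcd(S_{K_i}(p))=1$ exactly when $1\in S_{K_i}(p)$, and Lemma \ref{charactersandresidues} converts this to $\chi_{K_i}(g)\neq 0$. The only difference is that you make explicit the Sylow-conjugacy reduction that the paper leaves implicit.
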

\begin{proof}
Let \(\delta_{i,p}=\gcd(S_{K_i}(p))\) and \(\chi_i=\chi_{K_i}\). If \(p\) is a rational prime unramified in \(K\), then \(\delta_{1,p}=\delta_{2,p}\).  Furthermore, letting \(o(g^G):=o(g)\), if \(o(F_p)\) is a \(q\)-power for some prime \(q\), then \(\delta_{i,p}\) is itself a residue over \(p\), so that \(\chi_i(F_p^n)\neq 0\) if and only if \(\delta_{i,p}|n\), by Lemma \ref{charactersandresidues}.  In particular, \(\chi_1(F_p)\neq 0\) if and only if \(\chi_2(F_p)\neq 0\).  Hence, by Chebotarev density, if \(g\) is an element of a Sylow \(q\)-subgroup of \(G\), then \(G_{K_1}\) intersects the \(G\)-conjugacy class of \(g\) if and only if \(G_{K_2}\) does, so the Sylow \(q\)-subgroups of \(G_{K_1}\) and \(G_{K_2}\) conjugacy cover one another in \(G\).  
\end{proof}

\begin{proposition}\label{guc}
If \(K_1\) and \(K_2\) are ultra-coarsely arithmetically equivalent, then \(G_{K_1}\) and \(G_{K_2}\) have the same normal core in \(G\).
\end{proposition}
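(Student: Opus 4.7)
The plan is to give $\lcm(S_F(p))$ a group-theoretic interpretation and then conclude via Chebotarev density that the normal cores of $G_{K_1}$ and $G_{K_2}$ coincide.

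First I would note the standard fact (the Gassmann picture already in use in this section) that at a prime $p$ unramified in $K$ the splitting type $S_F(p)$ is the multiset of orbit sizes for the action of $\langle F_p\rangle$ on $G/G_F$ by left multiplication. For a cyclic group acting on a set the $\lcm$ of orbit sizes equals the order of the action, and the kernel of the action of $\langle F_p\rangle$ on $G/G_F$ is $\langle F_p\rangle\cap\bigcap_{g\in G}gG_Fg^{-1}$. Consequently, $\lcm(S_F(p))$ equals the order of the image of $F_p$ in the quotient $G/\mathrm{Core}(G_F)$, where $\mathrm{Core}(G_F)$ denotes the normal core of $G_F$ in $G$.

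Setting $N_i=\mathrm{Core}(G_{K_i})$, Chebotarev density guarantees that every $g\in G$ is realized as $F_p$ for infinitely many $p$ unramified in $K$. Since ultra-coarse arithmetic equivalence fails at only finitely many primes, for every $g$ one may pick such a Frobenius prime at which $\lcm(S_{K_1}(p))=\lcm(S_{K_2}(p))$ holds; by the identification above, this forces the order of $gN_1$ in $G/N_1$ to equal the order of $gN_2$ in $G/N_2$, for every $g\in G$. To finish, $g\in N_i$ if and only if the image of $g$ in $G/N_i$ has order one, so the elementwise equality of these orders forces $N_1$ and $N_2$ to contain exactly the same elements, and hence $N_1=N_2$.

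The step that demands the most care is the orbit-theoretic interpretation of $\lcm(S_F(p))$ at unramified primes, but this is an immediate consequence of the left-multiplication description of the splitting type already implicit in the setup of Section \ref{galois}; after that, the argument is just Chebotarev density paired with the tautological characterization of the normal core as the kernel on cosets.
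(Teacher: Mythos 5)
Your proof is correct, but it takes a genuinely different route from the paper's. The paper disposes of this proposition in two lines by citing Proposition 1.5 of the author's earlier work (ultra-coarse arithmetic equivalence is equivalent to having the same Galois closure over \(\mathbb{Q}\)) and then observing that the Galois closure of \(K_i\) inside \(K\) corresponds, under the Galois correspondence, to the normal core of \(G_{K_i}\) in \(G\). You instead prove the statement from scratch: the identification of \(\lcm(S_F(p))\) at an unramified prime with the order of the permutation induced by a Frobenius element on \(G/G_F\) --- hence with the order of the image of \(F_p\) in \(G/\mathrm{Core}(G_F)\) --- is correct (the order of a permutation is the lcm of its cycle lengths, the kernel on cosets is the normal core, and the order of \(gN\) in \(G/N\) is a class function, so the conjugacy-class ambiguity in \(F_p\) is harmless), and Chebotarev then gives elementwise equality of orders in the two quotients, from which \(N_1=N_2\) follows by testing which elements have order one. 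What your approach buys is self-containedness and consistency with the paper's stated aim of deriving these relations from elementary properties of group actions; in effect you re-prove the forward direction of the cited Proposition 1.5 rather than invoking it. What the paper's approach buys is brevity and the ability to reuse the full strength of the cited equivalence elsewhere (as it does in Section \ref{nempty}). Both are valid.
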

\begin{proof}
By Proposition 1.5 in \cite{phagan_corresponding_2024}, we know that \(K_1\) and \(K_2\) have the same Galois closure over the rationals.  Basic Galois theory supplies the desired equality $$\bigcap\limits_{g\in G} gG_{K_1}g^{-1}=\bigcap\limits_{g\in G} gG_{K_2}g^{-1}.$$
\end{proof}

\section{Empty Exceptional Sets}\label{empty}
Keeping the notation from the previous section, we give new proofs that the exceptional set is empty for notions \ref{q}-\ref{wk}.

	\begin{theorem}\label{qeq}
		If \(K_1\) and \(K_2\) are arithmetically equivalent, then \(S_{K_1}(p)=S_{K_2}(p)\) for any rational prime \(p\).
	\end{theorem}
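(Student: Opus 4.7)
The plan is to convert arithmetic equivalence into the almost conjugacy of \(G_{K_1}\) and \(G_{K_2}\) via Proposition \ref{qeqg}, and then to use Theorem \ref{formula} to extract \(S_{K_i}(p)\) from group-theoretic data that is visibly independent of \(i\), even when \(p\) ramifies in \(K\). Fix a place \(\omega\) of \(K\) over \(p\) and adopt the notation \(\mathcal{D}=\mathcal{I}C\) from Section \ref{galois}.

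The first step is to identify \(S_{K_i}(p)\) with the multiset of \(C\)-orbit sizes on \(X_i := \mathcal{I}\backslash G/G_{K_i}\), where \(C\) acts by left multiplication (well-defined because \(\mathcal{I}\) is normal in \(\mathcal{D}\)). Primes of \(K_i\) over \(p\) are in bijection with orbits of \(\mathcal{D}\) on \(G/G_{K_i}\), hence with orbits of \(C\) on \(X_i\) since \(\mathcal{D}=\mathcal{I}C\). For a coset \(\mathcal{I}gG_{K_i}\), one checks that its stabilizer in \(C\) corresponds under the projection \(\pi : \mathcal{D}\to\mathfrak{g}\) to the image in \(\mathfrak{g}\) of the decomposition group \(\mathcal{D}\cap gG_{K_i}g^{-1}\) of \(g\omega\) over its image \(\mathfrak{p}\) in \(K_i\); the index of this image in \(\mathfrak{g}\) is precisely \(f(\mathfrak{p}/p)\). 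Hence the \(C\)-orbit size equals the residual degree of the corresponding prime.

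Next, apply Theorem \ref{formula} to the \(C\)-set \(X_i\): the multiset of \(C\)-orbit sizes is determined by the quantities \(M_m := |C^m\backslash X_i|\) for \(m\mid|C|\). Since \(\mathcal{I}\) is normal in \(\mathcal{I}C^m\), we have \(|C^m\backslash X_i| = |\mathcal{I}C^m\backslash G/G_{K_i}|\), and a standard double-coset count gives
\[
|\mathcal{I}C^m\backslash G/G_{K_i}| = \langle \Ind_{\mathcal{I}C^m}^G 1, \Ind_{G_{K_i}}^G 1\rangle_G.
\]
Almost conjugacy is exactly the statement that \(\Ind_{G_{K_1}}^G 1 = \Ind_{G_{K_2}}^G 1\) as characters of \(G\), so the right-hand side is independent of \(i\). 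Theorem \ref{formula} then forces the \(C\)-orbit-size multisets on \(X_1\) and \(X_2\) to coincide, yielding \(S_{K_1}(p) = S_{K_2}(p)\).

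The main obstacle is the identification in the first step. At unramified primes, the classical Gassmann argument identifies \(S_{K_i}(p)\) with orbit sizes of cyclic Frobenius on \(G/G_{K_i}\); when \(p\) ramifies in \(K\), one must instead pass to \(X_i\) and act by the lift \(C\) of Frobenius, then verify that the relationship between stabilizers and residual degrees still holds with the correct index. Once this identification is in hand, Theorem \ref{formula} supplies the combinatorial machinery needed to recover the orbit multiset from subgroup-index data that is manifestly invariant under almost conjugacy.
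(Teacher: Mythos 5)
Your proof is correct and follows essentially the same route as the paper's: identify \(S_{K_i}(p)\) with the multiset of \(C\)-orbit sizes on \(\mathcal{I}\backslash G/G_{K_i}\), reduce to the double-coset counts \(|\mathcal{I}C^m\backslash G/G_{K_i}|\), and recover the orbit-size multiset via Theorem \ref{formula}. The only difference is presentational: where the paper cites external results to conclude \(|\mathcal{I}C^m\backslash G/G_{K_1}|=|\mathcal{I}C^m\backslash G/G_{K_2}|\) from almost conjugacy, you derive it directly from the inner-product formula for double cosets together with \(\Ind_{G_{K_1}}^G 1=\Ind_{G_{K_2}}^G 1\), which is a valid and self-contained substitute.
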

	\begin{proof}
		Let \(F\) be a subfield of \(K\).  Since \(\mathcal{I}\) is a normal subgroup of \(\mathcal{D}\), and \(\mathcal{D}=\mathcal{I}C\), there is a natural action of \(C\) on the double coset space \(\mathcal{I}\backslash G/G_F\), corresponding to the projection  \(\mathcal{I}\backslash G/G_F\rightarrow\mathcal{D}\backslash G/G_F\). Noting that \(c^m\mathcal{I}gG_F=\mathcal{I}gG_F\) if and only if \(g^{-1}\mathcal{I}c^mg\cap G_F\neq\emptyset\), a routine calculation verifies that we may identify \(S_F(p)\) with the multiset \(\mathcal{S}_{F,p}\) of cardinalities of orbits of this action.  Let \(S_m=C^m\mathcal{I}\backslash G\), and let \(S_{m,i}\) be the orbit space \(S_m/G_{K_i}\).  Observe that we may also think of \(S_{m,i}\) as the orbit space of the action of \(C^m\) on \(\mathcal{I}\backslash G/G_{K_i}\).  We know from Proposition \ref{qeqg} that \(G_{K_1}\) and \(G_{K_2}\) are almost conjugate in \(G\).  Therefore, by Proposition 2.4 in \cite{leininger_length_2007} and Lemma 2 in \cite{stuart_new_1995}, we know that \(|S_{m,1}|=|S_{m,2}|\) for each divisor \(m\) of \(|C|\), whence \(\mathcal{S}_{K_1,p}=\mathcal{S}_{K_2,p}\), by Theorem \ref{formula}.  Therefore, \(S_{K_1}(p)=S_{K_2}(p)\) for any prime \(p\), as desired.
	\end{proof}

	\begin{theorem}\label{KrEq}
		If \(K_1\) and \(K_2\) are Kronecker equivalent, then \(1\in S_{K_1}(p)\) if and only if \(1\in S_{K_2}(p)\) for any rational prime \(p\).
	\end{theorem}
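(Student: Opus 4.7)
The plan is to translate the condition $1\in S_F(p)$ into a group-theoretic statement via Lemma \ref{charactersandresidues}, and then leverage the conjugacy covering supplied by Proposition \ref{keqg}. The key observation is that for a fixed decomposition datum at a place of $K$ above $p$, the set of elements of $G$ witnessing a residual degree dividing $1$ is a union of whole conjugacy classes, to which conjugacy covering directly applies.

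Fix a place $\omega$ of $K$ above $p$, and let $\mathcal{D}$, $\mathcal{I}$, $\mathfrak{g}$, and $c$ be as set up in Section \ref{galois}. I would apply Lemma \ref{charactersandresidues} with $m=1$: this gives, for any subfield $F$ of $K$, the equivalence
$$
1\in S_F(p)\quad\Longleftrightarrow\quad \exists\,g\in G\text{ such that }g\mathcal{I}cg^{-1}\cap G_F\neq\emptyset.
$$
Now define
$$
T\ :=\ \bigcup_{g\in G} g\mathcal{I}cg^{-1}\ \subset\ G.
$$
Then $hTh^{-1}=T$ for every $h\in G$ directly from the definition, so $T$ is a union of $G$-conjugacy classes. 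With this notation, the equivalence above reads simply $1\in S_F(p)\iff T\cap G_F\neq\emptyset$.

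Next I would use Proposition \ref{keqg}: Kronecker equivalence of $K_1$ and $K_2$ forces $G_{K_1}$ and $G_{K_2}$ to conjugacy cover one another in $G$. This is equivalent to saying that every $G$-conjugacy class meets $G_{K_1}$ if and only if it meets $G_{K_2}$ (if a class $\mathcal{C}$ meets $G_{K_1}$ at $x$, then $x\in\bigcup_g gG_{K_2}g^{-1}$, so some conjugate of $x$—still in $\mathcal{C}$—lies in $G_{K_2}$, and vice versa). Since $T$ is a union of conjugacy classes of $G$, we conclude
$$
T\cap G_{K_1}\neq\emptyset\quad\Longleftrightarrow\quad T\cap G_{K_2}\neq\emptyset,
$$
which by the previous paragraph is exactly $1\in S_{K_1}(p)\iff 1\in S_{K_2}(p)$.

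I do not anticipate a serious obstacle: all the technical work was done in Lemma \ref{charactersandresidues} (which handles the ramified case uniformly) and in Proposition \ref{keqg}. The only point deserving care is verifying that $T$ is stable under $G$-conjugation, so that conjugacy covering can be invoked; but this is immediate from the presentation of $T$ as a union of conjugates. Notably the argument is independent of whether $p$ ramifies in $K$, so no separate treatment of ramified primes is required.
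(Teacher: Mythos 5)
Your proof is correct and takes essentially the same route as the paper's: both translate $1\in S_F(p)$ into a group-theoretic condition via Lemma \ref{charactersandresidues} and then transfer it between $G_{K_1}$ and $G_{K_2}$ using the conjugacy covering from Proposition \ref{keqg}. Your set $T$ is just a clean abstraction of the paper's explicit verification of conjugation-invariance (writing $c^m=st$ with $s\in\mathcal{I}$, $t\in\mathcal{D}_{\omega/\nu}$ and conjugating the factors), the paper merely running the argument for general $m$ before specializing to $m=1$.
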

	\begin{proof}
		 Suppose \(m\) is an integer divisible by some element of \(S_{K_1}(p)\), so there is a place \(\nu\) of \(K_1\) such that \(c^m\in\mathcal{I}\mathcal{D}_{\omega/\nu}\), by Lemma \ref{charactersandresidues}.  In particular, there are \(s\in\mathcal{I}\) and \(t\in\mathcal{D}_{\omega/\nu}\) such that \(c^m = st\).  Now, \(t\in G_{K_1}\), so there is \(g\in G\) such that \(g tg^{-1}\in G_{K_2}\), by Proposition \ref{keqg}.  But then, if \(\omega'=g\omega\), we also know that \(g tg^{-1}\in\mathcal{D}_{\omega'/p}\), and \(g sg^{-1}\in\mathcal{I}_{\omega'/p}\). Hence \(g c^m g^{-1}\in \mathcal{I}_{\omega'/p}\mathcal{D}_{\omega'/\nu'}\), where \(\nu'\) is the place of \(K_2\) divisible by \(\omega'\).  Therefore, there is a divisor of \(m\) in \(S_{K_2}(p)\), again by Lemma \ref{charactersandresidues}.  By symmetry, we conclude that, for \(m\in\mathbb{Z}\), there is a divisor of \(m\) in \(S_{K_1}(p)\) if and only if there is a divisor of \(m\) in \(S_{K_2}(p)\).  For \(m=1\), this means \(1\in S_{K_1}(p)\) if and only if \(1\in S_{K_2}(p)\).
	\end{proof}
	
	\begin{theorem}
		If \(K_1\) and \(K_2\) are weakly Kronecker equivalent, then \(\gcd(S_{K_1}(p))=\gcd(S_{K_2}(p))\) for any rational prime \(p\).
	\end{theorem}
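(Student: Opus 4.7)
The plan is to show $v_q(\gcd(S_{K_1}(p))) = v_q(\gcd(S_{K_2}(p)))$ for every prime $q$, since these valuations determine the gcds. Because the $q$-adic valuation of a gcd is the minimum of the $q$-adic valuations of the entries, the task reduces to proving that for each integer $k \geq 0$, some $f \in S_{K_1}(p)$ satisfies $v_q(f) \leq k$ if and only if the corresponding statement holds for $K_2$.

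To convert this into a divisibility condition amenable to Lemma \ref{charactersandresidues}, I would set $m = Mq^k$, where $M$ is the prime-to-$q$ part of $|c|$. Since every $f \in S_{K_i}(p)$ divides $|\mathfrak{g}|$, which in turn divides $|c|$, the prime-to-$q$ part of any such $f$ divides $M$. Therefore "some $f \in S_{K_i}(p)$ divides $m$" is equivalent to "$v_q(\gcd(S_{K_i}(p))) \leq k$". Crucially, $c^m$ has $q$-power order with this choice.

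To transfer divisibility between $K_1$ and $K_2$, I would mirror Theorem \ref{KrEq} at the Sylow level. Suppose some $f \in S_{K_1}(p)$ divides $m$. Lemma \ref{charactersandresidues} yields, for a suitable $\omega$ above some place $\nu$ of $K_1$, a decomposition $c^m = st$ with $s \in \mathcal{I}$ and $t \in \mathcal{D}_{\omega/\nu}$. The image of $t$ in $\mathfrak{g}_{\omega/\nu}$ coincides with that of $c^m$ and hence has $q$-power order. Applying Lemma \ref{SylowPullBack} to the projection $\mathcal{D}_{\omega/\nu} \to \mathfrak{g}_{\omega/\nu}$, I can lift this image to a $q$-element $\tilde t$ of a Sylow $q$-subgroup of $\mathcal{D}_{\omega/\nu}$; modifying $s$ by the compensating element of $\mathcal{I}_{\omega/\nu}$ produces a decomposition $c^m = s'\tilde t$ with $s' \in \mathcal{I}$ and $\tilde t$ a $q$-element of $G_{K_1}$. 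Proposition \ref{wkeqg} then furnishes $h \in G$ with $h\tilde t h^{-1}$ in a Sylow $q$-subgroup of $G_{K_2}$. Conjugating the decomposition by $h$ places $hc^m h^{-1}$ in $\mathcal{I}_{h\omega/p}\mathcal{D}_{h\omega/\nu'}$ for $\nu' = h\omega|_{K_2}$, and Lemma \ref{charactersandresidues} supplies the required divisor of $m$ in $S_{K_2}(p)$. The reverse direction is symmetric.

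The main obstacle I expect is the lifting step: Lemma \ref{SylowPullBack} must be applied to the local quotient $\mathcal{D}_{\omega/\nu} \to \mathfrak{g}_{\omega/\nu}$ rather than the ambient projection $\mathcal{D} \to \mathfrak{g}$, and one must verify that the replacement continues to fit into a decomposition of $c^m$ of the form demanded by Lemma \ref{charactersandresidues}. Once this is set up, the remainder is a routine adaptation of the Kronecker-equivalence argument, with Sylow conjugacy covering playing the role of full conjugacy covering.
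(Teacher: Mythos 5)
Your proposal is correct and follows essentially the same route as the paper's proof: reduce to comparing $q$-adic valuations of the gcds via divisibility by $c^{m}$ with $m$ the prime-to-$q$ part of $o(c)$ times a $q$-power, lift to a $q$-element of a Sylow $q$-subgroup of $\mathcal{D}_{\omega/\nu}$ using Lemma \ref{SylowPullBack}, and transfer via Proposition \ref{wkeqg} together with the conjugation trick from Theorem \ref{KrEq}. The only differences are cosmetic (an index shift $k\leftrightarrow k-1$ and a more explicit account of the Sylow lifting step, which the paper leaves terse).
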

	\begin{proof}
	Given a prime \(q\), Lemma \ref{charactersandresidues} tells us that \(\delta_{i,p}\) is not divisible by \(q^k\) if and only if there is a place \(\omega\) of \(K\) such that \(c^{n_qq^{k-1}}\in\mathcal{I}(\mathcal{D}\cap G_{K_i})\), where \(n_q=o(c)/q^{v_q(o(c))}\).  Indeed, given a divisor \(f\) of \(o(c)\), we know that \(f|n_qq^{k-1}\) if and only if \(q^k\) does not divide \(f\).  Now, \(c^{n_q}\) has \(q\)-power order, so if \(q^k\) does not divide \(\delta_{1,p}\), by Lemma \ref{SylowPullBack}, we may write \(c^{n_qq^{k-1}}=\iota\gamma\) for some \(\iota\in\mathcal{I}\) and \(\gamma\) an element of a Sylow \(q\)-subgroup of \(\mathcal{D}_{\omega/\nu}\), where \(\nu\) is the place of \(K_1\) under \(\omega\).  We can then use Proposition \ref{wkeqg}, along with the trick from the proof of Theorem \ref{KrEq} to deduce that \(q^k|\delta_{1,p}\) if and only if \(q^k|\delta_{2,p}\).  Hence, \(\delta_{1,p}\) and \(\delta_{2,p}\) have the same prime divisors, with the same multiplicity, so \(\delta_{1,p}=\delta_{2,p}\).
	\end{proof}

\begin{remark}
Our argument has shown that the group-theoretic \textit{implications} we derived in Section \ref{galois} for notions \ref{q}-\ref{wk} are in fact equivalent \textit{characterizations} of their associated notion of arithmetic similarity. Such characterizations have been another recurring theme in the literature.
\end{remark}
	
\section{Non-Empty Exceptional Sets}\label{nempty}
Since the historically studied notions of arithmetic similarity are defined by splitting statistics with empty exceptional sets, it is natural to wonder if there are notions admitting non-empty exceptional sets.  We give two examples of such notions.
\subsection*{Example 1}
It was shown in \cite{phagan_corresponding_2024} that number fields are ultra-coarsely arithmetically equivalent if and only if they have the same Galois closure over \(\mathbb{Q}\).  Therefore, to have a non-empty exceptional set in this case, it suffices to a find a prime \(p\) and a number field \(F\) with Galois closure \(K\) such that \(\lcm(S_F(p))\neq\lcm(S_K(p))\).  Proposition \ref{guc}, along with a bit of Galois theory, reveals that if one can arrange so that $$\mathcal{I}\neq\cap_{g\in G}\mathcal{I}(\mathcal{D}\cap gG_Fg^{-1}),$$ then \(\lcm(S_F(p))\neq\lcm(S_K(p))\).  This is an entirely reasonable condition, and indeed what is likely the first example one would consider produces a non-empty exceptional set.  
\begin{theorem}\label{nemptythm}
There are ultra-coarsely arithmetically equivalent fields with a non-empty exceptional set.
\end{theorem}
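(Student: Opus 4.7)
Following the hint in the paragraph preceding the theorem, the plan is to exhibit a non-Galois number field \(F\) with Galois closure \(K\) and a rational prime \(p\) (necessarily ramified in \(K\)) such that \(\lcm(S_F(p)) \neq \lcm(S_K(p))\). Such \(F\) and \(K\) share \(K\) as their common Galois closure and are therefore ultra-coarsely arithmetically equivalent by Proposition 1.5 of \cite{phagan_corresponding_2024}, so any such \(p\) automatically lies in their exceptional set.

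The simplest candidate is the non-Galois cubic \(F = \mathbb{Q}(\sqrt[3]{2})\) with Galois closure \(K = \mathbb{Q}(\sqrt[3]{2},\zeta_3)\), so that \(G \simeq S_3\) and \(G_F\) may be taken to be a subgroup of order two. I would take \(p = 2\). Since \(x^3 - 2\) is Eisenstein at \(2\), the prime \(2\) is totally ramified in \(F\), giving \(S_F(2) = \{1\}\) and \(\lcm(S_F(2)) = 1\).

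To analyze \(p = 2\) in \(K\), I would factor the local extension as the tower \(\mathbb{Q}_2 \subset \mathbb{Q}_2(\zeta_3) \subset \mathbb{Q}_2(\zeta_3,\sqrt[3]{2})\). The first step is unramified of degree \(2\) (since \(x^2 + x + 1\) is irreducible mod \(2\)), and \(x^3 - 2\) remains Eisenstein over \(\mathbb{Q}_2(\zeta_3)\), giving a totally ramified degree-\(3\) second step. Thus \(K\) has a unique prime over \(2\) with ramification index \(3\) and residual degree \(2\), so \(S_K(2) = \{2\}\) and \(\lcm(S_K(2)) = 2 \neq 1\). In the group-theoretic language of the hint, one has \(\mathcal{D} = S_3\) and \(\mathcal{I} = A_3\), and because \(\mathcal{I} \cdot gG_Fg^{-1} = S_3\) for every \(g \in G\), it follows that \(\mathcal{I} \neq \bigcap_{g \in G} \mathcal{I}(\mathcal{D} \cap gG_Fg^{-1})\), confirming the author's criterion.

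The main obstacle is really just the local-field bookkeeping at \(p = 2\), which is routine once the tower above is in place. No additional combinatorial or representation-theoretic input is needed beyond the characterization already recorded in Proposition \ref{guc} and the Eisenstein/unramified criteria for local extensions.
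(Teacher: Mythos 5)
Your proposal is correct and matches the paper's own proof essentially exactly: the same fields \(F=\mathbb{Q}(\sqrt[3]{2})\), \(K=\mathbb{Q}(\sqrt[3]{2},\zeta_3)\) and the same prime \(p=2\), yielding \(S_F(2)=\{1\}\) and \(S_K(2)=\{2\}\). The only cosmetic difference is that you compute the splitting types via local towers and the Eisenstein/unramified criteria, while the paper invokes monogenicity and the Dedekind--Kummer theorem; both computations are routine and give the same result.
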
 
\begin{proof}
Let \(F_1=\mathbb{Q}(\sqrt[3]{2})\), \(F_2=\mathbb{Q}(\sqrt{-3})\), and \(K=F_1F_2\).  Then \(K\) is the Galois closure of \(F_1\) over \(\mathbb{Q}\).  Using monogenicity of \(F_1\) and \(F_2\), along with the Dedekind-Kummer Theorem, one can calculate directly that \(S_{F_1}(2)=\{1\}\), \(S_{F_2}(2)=\{2\}\), \(S_{K}(2)=\{2\}\).  In particular, \(\lcm(S_K(2))\neq\lcm(S_{F_1}(2))\).
\end{proof}

\subsection*{Example 2}
Let \(s\) be the splitting statistic 
$$
s(p,K)=\begin{cases}1, & S_K(p)=\{1\}\\ 0, &\text{otherwise}\end{cases}
$$
Observe that the induced notion of arithmetic similarity consists of two sets: the singleton \(\{\mathbb{Q}\}\) and its complement \(\mathcal{N}-\{\mathbb{Q}\}\).  Furthermore, if \(K\neq \mathbb{Q}\), then \(s(p,K)=1\) if and only if \(p\) is totally ramified in \(K\).  Therefore, if \(K_1\) and \(K_2\) are number fields not equal to \(\mathbb{Q}\), and there is a rational prime \(p\) totally ramified in \(K_1\) but not in \(K_2\) (e.g. \(K_1=\mathbb{Q}(i), K_2=\mathbb{Q}(i,\zeta_3)\), with \(\zeta_3\) a primitive third root of unity), then the exceptional set of \(s\) for \(K_1\) and \(K_2\) is non-empty.  We record this as a theorem.
\begin{theorem}
The exceptional set of \(s\) as above is non-empty for some number fields.
\end{theorem}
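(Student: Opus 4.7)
The plan is to rely on two observations explicit in the preceding exposition: (i) for any \(K \neq \mathbb{Q}\) the statistic \(s(p,K) = 1\) is equivalent to \(p\) being totally ramified in \(K\), and (ii) every nontrivial number field has only finitely many ramified primes, so any two fields in \(\mathcal{N} - \{\mathbb{Q}\}\) automatically satisfy \(s(p,K_i) = 0\) for cofinitely many \(p\) and are therefore \(s\)-equivalent. Producing a non-empty exceptional set thus reduces to exhibiting a single prime \(p\) together with two non-trivial number fields \(K_1, K_2\) such that \(p\) is totally ramified in \(K_1\) but not in \(K_2\).

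I would take the explicit candidates floated in the discussion, \(K_1 = \mathbb{Q}(i)\), \(K_2 = \mathbb{Q}(i,\zeta_3) = \mathbb{Q}(\zeta_{12})\), and \(p = 2\), and verify the two required factorizations. The identity \(2 = -i(1+i)^2\) in \(\mathbb{Z}[i]\) shows directly that \(2\) is totally ramified in \(K_1\), so \(s(2,K_1) = 1\). For \(K_2\), I would invoke the standard ramification formulas for cyclotomic fields with \(12 = 4 \cdot 3\) to conclude \(e = \varphi(4) = 2\) and \(f = \mathrm{ord}_{(\mathbb{Z}/3)^\times}(2) = 2\), giving \(S_{K_2}(2) = \{2\}\) and hence \(s(2,K_2) = 0\). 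This places \(2\) in the exceptional set for the \(s\)-equivalent pair \(K_1, K_2\).

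I do not expect a genuine obstacle here. The two reductions in the first paragraph are purely formal, and the arithmetic verification reduces to one well-known quadratic factorization and one standard fact about cyclotomic ramification. If one wished to avoid citing the cyclotomic formulas, a more pedestrian alternative would be to observe that \(\mathbb{Q}(\zeta_3)/\mathbb{Q}\) and \(\mathbb{Q}(i)/\mathbb{Q}\) are linearly disjoint with coprime discriminants, so ramification indices multiply across the compositum — yielding \(e = 2\) in \(K_2\) and hence nontotal ramification since \([K_2 : \mathbb{Q}] = 4\). The only subtlety worth flagging is the pro forma check that \(s\) genuinely qualifies as a splitting statistic in the paper's sense, which is immediate because \(s\) is \(\{0,1\}\)-valued and therefore vacuously satisfies the uniform boundedness conditions on support and cardinality.
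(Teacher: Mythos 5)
Your proposal is correct and follows essentially the same route as the paper: the paper's argument is precisely the observation that any two fields in \(\mathcal{N}-\{\mathbb{Q}\}\) are \(s\)-equivalent because ramified primes are finite in number, combined with the same witness pair \(K_1=\mathbb{Q}(i)\), \(K_2=\mathbb{Q}(i,\zeta_3)\) and the prime \(2\). You merely supply the explicit factorization and cyclotomic ramification details that the paper leaves to the reader.
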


\begin{remark}
One may associate to a number field \(F\) and splitting statistic \(s\) a Dirichlet series \(D_{F,s}\) given by a polynomial Euler product
$$
D_{F,s}(z)=\prod\limits_{p\in\mathbb{P}}\prod\limits_{m\in\mathbb{Z}}(1-p^{-mz})^{-s(p,F)(m)}
$$
The examples in this section, along with the Strong Multiplicity One Theorem in \cite{kaczorowski_strong_2001}, indicate that \(D_{F,s}\) is not always of Selberg class.  On the other hand, if \(s(p,F)=S_F(p)\), then \(D_{F,s}=\zeta_F\) is a classic example of a Selberg class Dirichlet series.  The failure of a given \(D_{F,s}\) to be Selberg can be somewhat subtle.  For example, if \(K/\mathbb{Q}\) is Galois, and
$$
s(p,F)(n)=\begin{cases}
|S_K(p)|, & n=\lcm(S_F(p)) \\
0, & \text{otherwise}
\end{cases}
$$
then \(D_{K,s}=\zeta_K\), and for \(F\) a number field with Galois closure \(K\), we find that \(D_{F,s}\) differs from \(\zeta_K\) at only finitely many Euler factors, by Proposition 1.5 in \cite{phagan_corresponding_2024} (note also that the notion of arithmetic similarity induced by \(s\) is ultra coarse arithmetic equivalence). 
\end{remark}

\section{Final Remarks}\label{parting}
Notions of arithmetic similarity have thus far been introduced on an ad-hoc basis: \ref{q} was introduced by Gassmann as a counterpoint to Kronecker's program of determining a number field up to isomorphism with the decomposition laws of rational primes, \ref{k} was introduced as a means of validating Kronecker's program for Bauerian fields, \ref{wk} was introduced as a natural generalization of \ref{k}, and \ref{uc} was introduced to understand coarse relations between Dedekind zeta functions of some corresponding abelian extensions.  A more systematic approach might prove fruitful.  An obvious question of interest is exactly which splitting statistics always have empty exceptional sets, or in other words, which abide by a Strong Multiplicity One Theorem.  We conclude by mentioning two more.  
\begin{enumerate}
\item The splitting statistic inducing a given notion of arithmetic similarity is not necessarily unique.  For instance, arithmetic equivalence is induced by either of the statistics \((p,F)\mapsto S_F(p)\) or \((p,F)\mapsto |S_F(p)|\), and Kronecker equivalence is induced by either 
$$
(p,F)\mapsto\begin{cases}
1 & \text{if }1\in S_F(p),\\
0 & \text{ otherwise}
\end{cases}
$$
or \((p,F)\mapsto \min S_F(p)\). It would be interesting to know whether two statistics inducing the same notion have equal exceptional sets.  This seems likely and is indeed the case in every example known to the author.
\item There is a parallel topic in Riemannian geometry, which we will refer to as ``spectral similarity." The interested reader might first consult \cite{sunada_riemannian_1985} for a classic.  Here the equivalence relations of interest reflect properties of manifolds' length or eigenvalue spectrum.  Certain notions of spectral similarity have an obvious arithmetic counterpart, due to the coincidence of the arithmetic and covering Galois theories.  For instance, \textit{length equivalence}, defined in \cite{leininger_length_2007} as a generalization of length isospectrality, is the spectral analog of Kronecker equivalence.  The notion of \textit{eigenvalue equivalence} is also studied there as a generalization of isospectrality.  To the author's knowledge, the arithmetic analog of eigenvalue equivalence has not been studied, and it is unclear whether or not the resulting partition of \(\mathcal{N}\) would be a notion of arithmetic similarity.  
\end{enumerate}
	
\bibliography{ExceptionsInArithmeticSimilarity}

\begin{thebibliography}{LMNR07}

\bibitem[BMS18]{bogart_ell_2018}
Tristram Bogart and Guillermo Mantilla-Soler.
\newblock An l-p switch trick to obtain a new proof of a criterion for
  arithmetic equivalence.
\newblock {\em Research in Number Theory}, 5(1):1, October 2018.

\bibitem[Jeh77]{jehne_kronecker_1977}
Wolfram Jehne.
\newblock Kronecker classes of algebraic number fields.
\newblock {\em Journal of Number Theory}, 9(3):279--320, August 1977.

\bibitem[Kli78]{klingen_zahlkorper_1978}
Norbert Klingen.
\newblock Zahlkörper mit gleicher {Primzerlegung}.
\newblock {\em Journal für die reine und angewandte Mathematik},
  0299\_0300:342--384, 1978.

\bibitem[Kli98]{klingen_arithmetical_1998}
Norbert Klingen.
\newblock {\em Arithmetical {Similarities}: {Prime} {Decomposition} and
  {Finite} {Group} {Theory}}.
\newblock Oxford {Mathematical} {Monographs}. Oxford University Press, Oxford,
  New York, April 1998.

\bibitem[KP01]{kaczorowski_strong_2001}
Jerzy Kaczorowski and Alberto Perelli.
\newblock Strong multiplicity one for the {Selberg} class.
\newblock {\em Comptes Rendus de l'Académie des Sciences - Series I -
  Mathematics}, 332(11):963--968, June 2001.

\bibitem[LMM17]{linowitz_locally_2017}
Benjamin Linowitz, D.~B. McReynolds, and Nicholas Miller.
\newblock Locally {Equivalent} {Correspondences}.
\newblock {\em Annales de l’institut Fourier}, 67(2):451--482, 2017.
\newblock Publisher: Cellule MathDoc/CEDRAM.

\bibitem[LMNR07]{leininger_length_2007}
C.~J. Leininger, D.~B. McReynolds, W.~D. Neumann, and A.~W. Reid.
\newblock Length and {Eigenvalue} {Equivalence}.
\newblock {\em International Mathematics Research Notices}, 2007:rnm135,
  January 2007.

\bibitem[Loc94a]{lochter_weakly_1994}
Manfred Lochter.
\newblock Weakly {Kronecker} equivalent number fields.
\newblock {\em Acta Arithmetica}, 67(4):295--312, 1994.

\bibitem[Loc94b]{lochter_weakly_1994-1}
Manfred Lochter.
\newblock Weakly {Kronecker} equivalent number fields and global norms.
\newblock {\em Acta Arithmetica}, 67(2):105--121, 1994.

\bibitem[Loc95]{lochter_new_1995}
M.~Lochter.
\newblock New {Characterizations} of {Kronecker} {Equivalence}.
\newblock {\em Journal of Number Theory}, 53(1):115--136, July 1995.

\bibitem[Per77]{perlis_equation_1977}
Robert Perlis.
\newblock On the equation ζk(s) = ζk'(s).
\newblock {\em Journal of Number Theory}, 9(3):342--360, August 1977.

\bibitem[Pha24]{phagan_corresponding_2024}
Shaver Phagan.
\newblock Corresponding {Abelian} {Extensions} of {Integrally} {Equivalent}
  {Number} {Fields}, October 2024.
\newblock arXiv:2408.09666 [math].

\bibitem[Ser79]{serre_local_1979}
Jean-Pierre Serre.
\newblock {\em Local {Fields}}, volume~67 of {\em Graduate {Texts} in
  {Mathematics}}.
\newblock Springer New York, New York, NY, 1979.

\bibitem[SP95]{stuart_new_1995}
D.~Stuart and R.~Perlis.
\newblock A {New} {Characterization} of {Arithmetic} {Equivalence}.
\newblock {\em Journal of Number Theory}, 53(2):300--308, August 1995.

\bibitem[Sun85]{sunada_riemannian_1985}
Toshikazu Sunada.
\newblock Riemannian {Coverings} and {Isospectral} {Manifolds}.
\newblock {\em Annals of Mathematics}, 121(1):169--186, 1985.
\newblock Publisher: Annals of Mathematics.

\bibitem[Sut]{sutherland_stronger_nodate}
Andrew~V. Sutherland.
\newblock Stronger arithmetic equivalence.
\newblock {\em discrete Analysis}, 2021.
\newblock arXiv:2104.01956 [math].

\end{thebibliography}
\bibliographystyle{alpha}
\end{document}